\newtheorem{theorem}{Theorem}
\newtheorem{corollary}{Corollary}
\newtheorem{proposition}{Proposition}
\newfont{\cyrfnt}{wncyr10}
\newcommand{\cyr}{\baselineskip12.5pt\cyrfnt\cyracc}
\newtheorem{remark}{Remark}
\newenvironment{proof}
{\noindent{\em Proof\/}.}{{ $\Box$}\smallskip\par}
\newcommand{\CC}{{\Bbb C}}
\newcommand{\ZZ}{{\Bbb Z}}
\newcommand{\calO}{{\widetilde{\cal O}}}
\newcommand{\calC}{{\cal C}}
\newcommand{\calD}{{\widetilde{\cal D}}}
\newcommand{\calT}{{\widetilde{\cal T}}}
\newcommand{\eps}{\varepsilon}
\title{Poincar\'e series and monodromy of the simple and unimodal boundary singularities}
\author{Wolfgang Ebeling
\thanks{Supported by  INTAS-05-7805. AMS Subject classification: 32S40, 58K10.
Keywords: boundary singularity, Poincar\'e series, monodromy, McKay correspondence}
}
\date{}
\begin{document}

\maketitle

\begin{abstract} A boundary singularity is a singularity of a function on a manifold with boundary. The simple and unimodal boundary singularities were classified by V.~I.~Arnold and V.~I.~Matov. The McKay correspondence can be generalized to the simple boundary singularities. We consider the monodromy of the simple, parabolic, and exceptional unimodal boundary singularities. We show that  the characteristic polynomial of the monodromy is related to the Poincar\'e series of the coordinate algebra of the ambient singularity.
\end{abstract}

\section*{Introduction}
There is a well known classification of hypersurface singularities due to V.~I.~Ar\-nold. The first classes are the simple and the unimodal singularities. The simple surface singularities in $\CC^3$ are precisely the Kleinian singularities which are quotients of $\CC^2$ by  finite subgroups of ${\rm SU}(2)$. These are weighted homogeneous singularities. Also the parabolic and the 14 exceptional unimodal singularities can be defined by weighted homogeneous equations. To such a singularity, there can be associated two invariants of different nature. One is the Poincar\'e series of the (graded) coordinate algebra of the singularity. The other one is the characteristic polynomial of the monodromy operator of the singularity. It turned out that there is a close relation between these two invariants. More precisely, it was shown that the Poincar\'e series of the simple (except $A_{2n}$) and the 14 exceptional unimodal singularities is the quotient of the characteristic polynomials of two Coxeter elements which are related to the monodromy operators of the singularities. For the simple singularities this was derived from the McKay correspondence.

Some of these singularities admit a symmetry. This leads to the boundary singularities considered by Arnold in \cite{A78}. Arnold and V.~I.~Matov classified the simple and unimodal boundary singularities. Here we investigate the relation between the characteristic polynomial of the monodromy  and the Poincar\'e series of the ambient hypersurface singularity  for such a singularity. The simple boundary singularities arise from simple hypersurface singularities and there is a generalization of the McKay correspondence for these cases. R.~Stekolshchik has extended our theorem for the Kleinian singularities to this generalized McKay correspondence. Here we give an interpretation of his result in terms of singularity theory.

For 7 of the 12 exceptional unimodal boundary singularities we show that there is a direct relation between the Poincar\'e series of the ambient singularity and the characteristic polynomial of the monodromy.

The author is grateful to Sabir~Gusein-Zade for suggesting him to study relations between Poincar\'e series and monodromy for boundary singularities.

\section{Simple and unimodal hypersurface singularities}

Let $X=\{ f(x,y,z)=0 \} \subset \CC^3$ be the zero set of a weighted homogeneous function $f$ with weights $q_1, q_2, q_3$ and degree $d$. The coordinate algebra $A=\CC[x,y,z]/(f)$ has a natural grading $A= \oplus_{m=0}^\infty A_m$ where $A_m$ consists of the polynomials which are weighted homogeneous of degree $m$. The series $p_X(t)=\sum_{m=0}^\infty \dim A_m \cdot t^m$ is called the Poincar\'e series of $X$. One has
$$p_X(t) = \frac{(1-t^d)}{(1-t^{q_1})(1-t^{q_2})(1-t^{q_3})}.$$

Assume that $0$ is an isolated singularity of $X$.
Let $B_\eps$ be a ball in $\CC^3$ around the origin of a sufficiently small radius $\eps >0$, let $\lambda \in \CC$ be sufficiently small with $|\lambda| >0$, and let $X_\lambda := f^{-1}(\lambda) \cap B_\eps$ be the Milnor fibre of the function $f$. Let $\phi_X(t)$ be the characteristic polynomial of the monodromy operator $c: H_2(X_\lambda) \to H_2(X_\lambda)$.

If $\phi$ is a rational function of the form
$$\phi(t)= \prod_{k | d} (1-t^k)^{\alpha_k} \mbox{ with } \alpha_k \in \ZZ,$$
then the {\em Saito dual} $\phi^\ast$ of $\phi$ is defined as
$$\phi^\ast(t)= \prod_{k | d} (1-t^{d/k})^{- \alpha_k}.$$

The simple and unimodal singularities of hypersurfaces in $\CC^3$ are classified by Arnold (see \cite{AGV85}) and they fall into the following classes:
\begin{itemize}
\item Simple singularities $X=A_\mu, D_\mu, E_6, E_7, E_8$.
\item Parabolic singularities $X= \widetilde{E}_6, \widetilde{E}_7, \widetilde{E}_8$.
\item Hyperbolic singularities $X=T_{p,q,r}$.
\item 14 exceptional unimodal singularities.
\end{itemize}

Except the hyperbolic singularities, all singularities are, for a certain value of the modulus in the unimodal case, given by weighted homogeneous equations.
To an exceptional unimodal hypersurface singularity $(X,0)$ in $\CC^3$ one can associate two triples of numbers. A minimal good resolution of the singularity has an exceptional divisor consisting of a rational curve of self-intersection $-1$ intersected by three rational curves of self-intersection $-b_1$, $-b_2$, and $-b_3$ respectively \cite{Dolgachev74}. The numbers $b_1,b_2,b_3$ are called the {\em Dolgachev numbers} ${\rm Dol}(X)$ of the singularity $(X,0)$. The Coxeter-Dynkin diagram with respect to a certain distinguished basis of vanishing cycles of $H_2(X_\lambda)$ has the shape of Figure~\ref{Fig1}. The numbers $p_1, p_2, p_3$ are called the {\em Gabrielov
numbers} ${\rm Gab}(X)$ of the singularity \cite{G75}. Here each vertex represents a
sphere of self-intersection number $-2$, two vertices connected by a
single solid edge have intersection number 1, and two vertices connected by a
double broken line have intersection number $-2$. The intersection number is 0 if the corresponding vertices are not connected.
\begin{figure}\centering
\unitlength1cm
\begin{picture}(8.5,7.5)
\put(1,0.5){\includegraphics{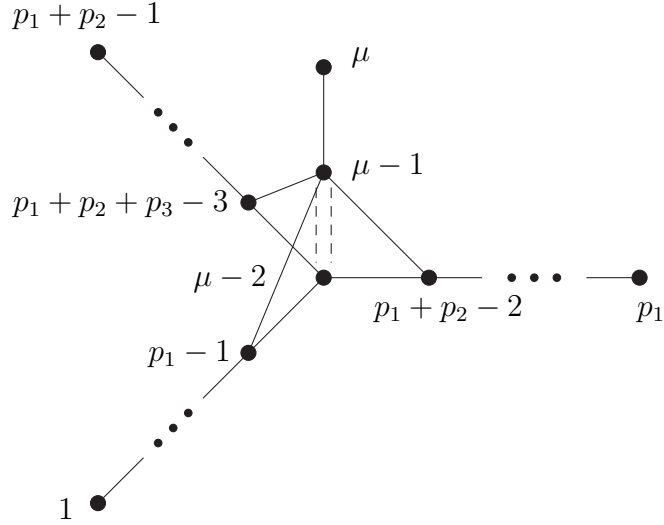}}
\put(0.6,0.4){$1$}
\put(1.8,2.5){$p_1-1$}
\put(2.4,3.5){$\mu-2$}
\put(0,4.5){$p_1+p_2+p_3-3$}
\put(0,7){$p_1+p_2-1$}
\put(4.8,3.1){$p_1+p_2-2$}
\put(8.3,3.1){$p_1$}
\put(4.5,5){$\mu-1$}
\put(4.5,6.5){$\mu$}
\end{picture}
\caption{Coxeter-Dynkin diagram of an exceptional unimodal singularity}
\label{Fig1}
\end{figure}
{\em Arnold's strange duality} is the observation that there is an involution  $X \mapsto X^\ast$ on the
set of the 14 exceptional unimodal singularities such that
$${\rm Dol}(X) = {\rm Gab}(X^\ast), \quad {\rm Gab}(X)={\rm Dol}(X^\ast).$$
Note that a singularity of type $T_{p,q,r}$ has a Coxeter-Dynkin diagram of the form of Figure~\ref{Fig1} with $p_1=p$, $p_2=q$, $p_3=r$ and the vertex with label $\mu$ omitted. The characteristic polynomial of the monodromy of a singularity of type $T_{p,q,r}$ is equal to
$$\phi_{T_{p,q,r}}(t) = (1-t)^{-1}(1-t^p)(1-t^q)(1-t^r).$$

\begin{theorem} \label{thmhyp}
The Poincar\'e series of the singularity $(X,0)$ is a quotient 
$$p_X = \frac{\phi}{\psi}$$
of two polynomials in the variable $t$ in the following cases:
\begin{itemize}
\item[{\rm (i)}] $(X,0)$ is simple. If $(X,0)$ is not of type $A_{2n}$ then $\phi=\phi_X=\phi_X^\ast$ is the characteristic polynomial of the Coxeter element of the corresponding root system and $\psi$ is the characteristic polynomial of the Coxeter element of the corresponding affine root system.
\item[{\rm (ii)}] $(X,0)$ is parabolic. In this case, $\phi=\phi_X^\ast$ and $\psi=1$.
\item[{\rm (iii)}] $(X,0)$ is an exceptional unimodal singularity. In this case, $\phi=\phi_{X^\ast}=\phi_X^\ast$ and $\psi=\phi_{T_{p,q,r}}$ where $p,q,r$ are the Dolgachev numbers of the singularity $(X,0)$.
\end{itemize}
\end{theorem}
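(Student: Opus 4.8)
The plan is to prove the three parts separately, in each case reducing the claimed equality $p_X=\phi/\psi$ to an identity between finite products of factors $(1-t^{k})$. The common tools are the formula $p_X(t)=(1-t^{d})/\bigl((1-t^{q_1})(1-t^{q_2})(1-t^{q_3})\bigr)$ recalled above; Arnold's classification, which supplies for every singularity on the list its weight system $(q_1,q_2,q_3;d)$, its Dolgachev numbers, and its Gabrielov numbers; and the known descriptions of the characteristic polynomial $\phi_X$ of the monodromy, either through the Milnor--Orlik formula in terms of the weight system, or through the Coxeter--Dynkin diagram (the Dynkin diagram in the simple case, the $T$-shaped diagram of Figure~\ref{Fig1} in the exceptional one). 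So I would first assemble these tables and then run the verification type by type; genuinely new input is needed only in part~(iii).

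For part~(i) I would use the classical fact that for a simple singularity the monodromy on $H_2(X_\lambda)$ is conjugate to a Coxeter element of the corresponding Weyl group, so that $\phi_X$ is its characteristic polynomial: for a simply-laced root system of rank~$\ell$ with Coxeter number $h$ and exponents $m_1,\dots,m_\ell$ this equals $\prod_j (t-e^{2\pi i m_j/h})$, and outside type $A_{2n}$ one has $d=h$, with respect to which the polynomial is Saito self-dual (for $A_{2n}$ the relevant degree is $d=2h$ and self-duality fails). The McKay correspondence attaches to $X$ a finite subgroup $G\subset {\rm SU}(2)$, hence an affine root system; I would identify $\psi$ with the characteristic polynomial of the Coxeter element of that affine root system and then check $p_X\psi=\phi$ against the short list of weight systems --- or, more structurally, derive it from the McKay correspondence, as in the earlier work cited in the introduction. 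Type $A_{2n}$ is handled by a direct computation in the one-parameter family $(q_1,q_2,q_3;d)=(2,2n+1,2n+1;2(2n+1))$.

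Part~(ii) involves only the three weight systems $(1,1,1;3)$, $(1,1,2;4)$, $(1,2,3;6)$, whose monodromy characteristic polynomials are classical (for instance $\phi_{\widetilde E_6}(t)=(1-t^{3})^{3}/(1-t)$); I would compute $\phi_X^{*}$ straight from the definition of the Saito dual, compute $p_X$ from the formula, and observe that the two rational functions coincide (so $\psi=1$), an elementary check in three cases.

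Part~(iii) carries the weight of the argument. I would split the target identity $p_X\cdot\phi_{T_{p,q,r}}=\phi_{X^{*}}$, with $(p,q,r)={\rm Dol}(X)$, into two ingredients. First, $\phi_X^{*}=\phi_{X^{*}}$: this is the monodromy form of Arnold's strange duality, a theorem of K.~Saito on the duality of regular weight systems (the weight system dual to that of $X$ being that of $X^{*}$); it can also be read off the tables directly. Second, using the formula for $p_X$ and the shape $\phi_{T_{p,q,r}}(t)=(1-t)^{-1}(1-t^{p})(1-t^{q})(1-t^{r})$, the identity is equivalent to the polynomial identity
$$(1-t^{d})(1-t^{b_1})(1-t^{b_2})(1-t^{b_3})=(1-t)(1-t^{q_1})(1-t^{q_2})(1-t^{q_3})\,\phi_{X^{*}}(t),\qquad (b_1,b_2,b_3)={\rm Dol}(X),$$
whose left side involves only the weight system and Dolgachev numbers of $X$ and whose right side involves the monodromy polynomial of the dual singularity, itself expressible through $\phi_X^{*}$ and ${\rm Gab}(X^{*})={\rm Dol}(X)$. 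I would verify this identity for each of the $14$ exceptional unimodal singularities, either from the tables of weight systems, Dolgachev numbers and Milnor--Orlik monodromy polynomials, or from the $T$-shaped Coxeter--Dynkin diagram of Figure~\ref{Fig1} together with the Gabrielov numbers. I expect the main obstacle to be conceptual rather than computational: one has to relate the Dolgachev numbers, which are extracted from a resolution, and the Gabrielov numbers, which are extracted from the Milnor lattice, to the weight system in just the way that makes the displayed cyclotomic identity hold --- and it is precisely strange duality, realized by Saito's duality of weight systems, that supplies this bridge, which is also why the clean ratio in~(iii) is special to the exceptional unimodal class.
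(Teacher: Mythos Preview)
Your plan is correct and would work. The paper's own proof, however, consists entirely of citations to the author's earlier papers: (i) is \cite[Theorem~2]{E02}, (ii) is \cite[Example~1]{E02}, and (iii) follows from \cite[Theorem~1]{E02} together with \cite[Proposition~1 and Remark~1]{E03} (alternatively, from Lenzing--de~la~Pe\~na \cite{LdlP}). Your outline is in effect a reconstruction of what those references establish --- the McKay-correspondence derivation for~(i), the three direct Saito-dual checks for~(ii), and for~(iii) the combination of Saito's duality $\phi_X^{\ast}=\phi_{X^{\ast}}$ with a cyclotomic identity verified over the fourteen cases --- so the two routes coincide in substance if not in presentation. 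One minor point: in part~(i) the theorem makes no specific claim about $\phi$ and $\psi$ for type $A_{2n}$, so your separate treatment of that case is superfluous.
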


\begin{remark} Note that in Theorem~\ref{thmhyp} (i) and (iii), the Poincar\'e series is the quotient of the characteristic polynomials of the Coxeter elements corresponding to two Coxeter-Dynkin diagrams which differ only by one vertex. In Case (i) the denominator diagram has one vertex more and in Case (iii) one vertex less than the numerator diagram.
\end{remark}

\begin{proof}
(i) is \cite[Theorem~2]{E02}, (ii) follows from \cite[Example~1]{E02}, and (iii) follows from \cite[Theorem~1]{E02} and \cite[Proposition~1 and Remark~1]{E03}. Note that (iii) also follows from results of H.~Lenzing and J.~de la Pe\~{n}a \cite{LdlP}. See also the forthcoming paper \cite{EP}.
\end{proof}

\section{McKay correspondence}

Theorem~\ref{thmhyp}(i) can be derived from the McKay correspondence, see \cite{E02}.
Let $G \subset SU(2)$ be a finite subgroup. Then $(X,0)=(\CC^2/G,0)$ is one of the simple surface singularities. If $G$ is a cyclic group of order $\mu+1$ then $(X,0)$ is of type $A_\mu$. In this case, the weights and the degree of $(X,0)$ are not unique. We assume that a singularity $(X,0)$ of type $A_\mu$  is given by the standard equation $f(x,y,z)=x^{\mu+1} + y^2 +z^2=0$.
If $G$ is not cyclic of odd order (i.e. $(X,0)$ is not of type $A_{2n}$) then we have
$$p_X(t^2) = P_G(t) = \sum_{m = 0}^\infty \dim S^m(\CC^2)^G \cdot t^m,$$
where $P_G(t)$ is the Poincar\'e series of the algebra of invariants of the group $G$.
Let $\rho_0, \ldots, \rho_n$ be the inequivalent irreducible representations of $G$ where $\rho_0$ is the trivial representation. Denote by $\rho: G \to {\rm SU}(2)$ the given natural representation. 
Define an $(n+1) \times (n+1)$-matrix $B=(b_{ij})$ by decomposing the tensor products
$$\rho_j \otimes \rho = \oplus_{i=0}^n b_{ij} \rho_i$$
into irreducible components. Define the matrix $C$ by $B=2I-C$. McKay \cite{McKay80} observed that the matrix $C$ is the affine Cartan matrix corresponding to a simply laced root system $R$. In \cite{E02} the following theorem is proved. 

\begin{theorem} \label{theoEb} 
Let $G$ be not cyclic of odd order. Then the Poincar\'e series $P_G$ satisfies
$$P_G(t) = \frac{\phi(t^2)}{\psi(t^2)}$$
where $\phi$ is the characteristic polynomial of the Coxeter element and $\psi$ is the characteristic polynomial of the affine Coxeter element of the corresponding root system $R$.
\end{theorem}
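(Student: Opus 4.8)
The plan is to compute $P_G(t)$ together with all of its ``equivariant'' companions at once. For $0\le i\le n$ put
$$p_i(t)=\sum_{m\ge 0}\dim\bigl(S^m(\CC^2)\otimes\rho_i\bigr)^G\,t^m,$$
so that $p_0(t)=P_G(t)$ since $\rho_0$ is trivial, and collect these into a column vector $\underline p(t)=(p_0(t),\dots,p_n(t))^{T}$. The first step is to extract a functional equation for $\underline p(t)$ from the Koszul complex resolving $\CC$ over $S^\bullet(\CC^2)$. Because $G\subset {\rm SU}(2)\subset {\rm SL}(2,\CC)$, the $G$-module $\Lambda^2\CC^2$ is trivial, so for every $m\ge 0$ one has an exact sequence of $G$-modules
$$0\to S^{m-2}(\CC^2)\to \CC^2\otimes S^{m-1}(\CC^2)\to S^m(\CC^2)\to \delta_{m,0}\CC\to 0$$
(with $S^{<0}(\CC^2)=0$ and $\CC^2=\rho$). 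Tensoring with $\rho_i$, taking $G$-invariants (exact in characteristic zero), using $\rho\otimes\rho_i=\bigoplus_j b_{ij}\rho_j$ — here $B$ is symmetric because $\rho$ is self-dual — and summing the resulting alternating-sum-of-dimensions relations with weight $t^m$, everything collapses to the single matrix equation
$$\bigl((1+t^2)I-tB\bigr)\,\underline p(t)=\underline e_0,\qquad \underline e_0=(1,0,\dots,0)^{T};$$
since $B=2I-C$, this matrix is also $(1-t)^2I+tC$. By Cramer's rule, $P_G(t)=p_0(t)$ equals the $(0,0)$-cofactor of $(1+t^2)I-tB$ divided by its determinant.

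Next I would identify the numerator and denominator. In the McKay correspondence $\rho_0$ is the affine node of the affine Dynkin diagram of $R$, and deleting this node leaves the finite Dynkin diagram of $R$; hence the numerator is $\det\bigl((1+t^2)\widehat I-t\widehat B\bigr)$ with $\widehat B$ the adjacency matrix of the finite Dynkin diagram, and the denominator is $\det\bigl((1+t^2)I-tB\bigr)$ with $B$ the adjacency matrix of the affine diagram. It remains to match each determinant with the characteristic polynomial of the relevant Coxeter element, evaluated at $t^2$, and this is where the hypothesis on $G$ is used: the affine Dynkin diagram of $R$ is bipartite exactly when $R$ is not of type $\widetilde A_{2k}$, i.e. exactly when $G$ is not cyclic of odd order, while a finite ADE Dynkin diagram is a tree and so always bipartite. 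For a bipartite graph with vertex bipartition $V_1\sqcup V_2$ and adjacency matrix in block form $\mathcal B=\left(\begin{array}{cc}0&E\\ E^{T}&0\end{array}\right)$, the Coxeter element $w=w_1w_2$ with $w_\ell=\prod_{v\in V_\ell}s_v$ has block form $w=\left(\begin{array}{cc}EE^{T}-I&-E\\ E^{T}&-I\end{array}\right)$, and a Schur-complement computation (clearing the $(2,2)$-block) gives
$$\det\bigl((1+t^2)I-t\mathcal B\bigr)=(1+t^2)^{|V_2|-|V_1|}\det\bigl((1+t^2)^2I-t^2EE^{T}\bigr)=\det(t^2I-w).$$
Applying this to the affine diagram produces the denominator $\psi(t^2)$ and to the finite diagram the numerator $\phi(t^2)$, whence $P_G(t)=\phi(t^2)/\psi(t^2)$.

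I expect the Schur-complement identity in the last paragraph to be the main obstacle: one must pin down the block form of $w=w_1w_2$ correctly and carry out the determinant manipulation without sign errors, and — more conceptually — recognize that bipartiteness of the affine diagram is equivalent to the stated restriction on $G$ and is exactly what lets $w$ be written as a product of two involutions, each a product of commuting reflections. A secondary point needing care is the exactness used to pass from the Koszul relations to the matrix equation, where it is essential that $\Lambda^2\CC^2$ is the trivial $G$-module; this uses $G\subset {\rm SU}(2)$ rather than merely $G\subset {\rm GL}(2,\CC)$.
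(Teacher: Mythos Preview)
The paper does not supply its own proof of this theorem: it is quoted from \cite{E02} (``In \cite{E02} the following theorem is proved''), and the relevant proof of Theorem~\ref{thmhyp}(i) likewise just cites \cite[Theorem~2]{E02}. So there is nothing in the present paper to compare your argument against.

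That said, your argument is correct and is essentially the classical route (Kostant, Gonzalez-Sprinberg--Verdier, Springer): pass from the Koszul exact sequence on $S^\bullet(\CC^2)$ to the matrix identity $\bigl((1+t^2)I-tB\bigr)\underline p(t)=\underline e_0$, read off $P_G(t)$ by Cramer as the quotient of the principal minor at the affine node by the full determinant, and then use bipartiteness of the (affine) Dynkin diagram to identify $\det\bigl((1+t^2)I-t\mathcal B\bigr)$ with $\det(t^2I-w)$ for the bipartite Coxeter element $w=w_1w_2$. Your Schur-complement computation is right; both sides equal $(1+t^2)^{|V_2|-|V_1|}\det\bigl((1+t^2)^2 I-t^2 EE^{T}\bigr)$. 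The hypothesis that $G$ is not cyclic of odd order is used exactly where you say: the affine diagram $\widetilde A_{2k}$ is the unique non-bipartite simply-laced affine diagram, so the bipartite factorization of the affine Coxeter element fails precisely there. One small point worth making explicit in a write-up: for the $\widetilde A_{2k-1}$ case the affine Coxeter elements are not all conjugate, so ``the'' affine Coxeter element in the statement should be understood as the bipartite one; for the $\widetilde D$ and $\widetilde E$ diagrams (trees) all Coxeter elements are conjugate and no ambiguity arises.
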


Slodowy \cite{Sl80, Sl88} has found a generalization of the McKay correspondence to include the non simply-laced Coxeter-Dynkin diagrams. We recall Slodowy's construction.

Let $H \subset {\rm SU}(2)$ be a finite subgroup and let $G \lhd H$ be a normal subgroup of $H$. If $\rho: H \to {\rm GL}(V)$ is a representation of $H$, denote by $\rho^\downarrow$ its restriction to the subgroup $G$. Let $\rho_0, \ldots, \rho_m$ be the inequivalent irreducible representations of $H$ where $\rho_0$ is the trivial representation. Let $\rho: G \to {\rm SU}(2)$ be the given natural representation.
Let $\rho^\downarrow_0, \ldots , \rho^\downarrow_n$ be the different inequivalent restrictions to $G$ of the representations $\rho_0, \ldots, \rho_m$. Define an $(n+1) \times (n+1)$-matrix $B=(b_{ij})$ by decomposing the tensor products
$$\rho^\downarrow_j \otimes \rho = \oplus_{i=0}^n b_{ij} \rho^\downarrow_i$$
into irreducible components. Define the matrix $C$ by $B=2I-C$. Then in the cases listed in Table~\ref{Tab1}, $C$ is the affine Cartan matrix corresponding to the dual $R^\vee$ of the root system $R$ in the table.

The following generalization of Theorem~\ref{theoEb} is due to R.~Stekolshchik \cite{St06}. 

\begin{theorem}[Stekolshchik]  \label{Thm:St}
Let $G$, $H$, and $R$ be as in Table~\ref{Tab1}. Then the Poincar\'e series $P_G$ satisfies
$$P_G(t) = \frac{\phi_{R^\vee}(t^2)}{\psi_{R^\vee}(t^2)}$$
where $\phi_{R^\vee}$ is the characteristic polynomial of the Coxeter element and $\psi_{R^\vee}$ is the characteristic polynomial of the affine Coxeter element of the corresponding dual root system $R^\vee$.
\end{theorem}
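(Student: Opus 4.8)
The plan is to reduce Theorem~\ref{Thm:St} to the already-established Theorem~\ref{theoEb} by exploiting the fact that the restriction functor from $H$-representations to $G$-representations interacts nicely with the folding of Dynkin diagrams. First I would observe that the data $(G,H)$ in each row of Table~\ref{Tab1} realizes a quotient $\Gamma = H/G$ acting on the affine Dynkin diagram of the simply-laced root system $\widehat R$ attached to $G$ by the ordinary McKay correspondence, with orbit space the affine diagram of $R^\vee$. Concretely, the inequivalent restrictions $\rho^\downarrow_0,\dots,\rho^\downarrow_n$ are exactly the $\Gamma$-orbits on the set $\{\sigma_0,\dots,\sigma_N\}$ of irreducible $G$-representations, and the matrix $B=(b_{ij})$ defined by $\rho^\downarrow_j\otimes\rho=\oplus_i b_{ij}\rho^\downarrow_i$ is the $\Gamma$-folded version of the McKay matrix $\widehat B$ for $G$; this is precisely Slodowy's assertion that $C=2I-B$ is the affine Cartan matrix of $R^\vee$, which we are permitted to assume from the discussion preceding the theorem.

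Next I would relate the Poincar\'e series $P_G(t)=\sum_m \dim S^m(\CC^2)^G\, t^m$ to the representation-theoretic bookkeeping. The key identity is $\dim\bigl(S^m(\CC^2)\bigr)^G = \langle S^m(\CC^2),\rho_0\rangle_G$, and more generally one tracks the vector $v_m = \bigl(\langle S^m(\CC^2),\sigma_i\rangle_G\bigr)_i$; the Clebsch--Gordan recursion $S^{m}(\CC^2)\otimes\rho \cong S^{m-1}(\CC^2)\oplus S^{m+1}(\CC^2)$ (as $G$-modules, for the $\mathrm{SU}(2)$-action) translates into the linear recursion governed by $\widehat B$. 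Applying Theorem~\ref{theoEb} to $G$ itself already gives $P_G(t)=\phi_{\widehat R}(t^2)/\psi_{\widehat R}(t^2)$ in terms of the Coxeter and affine Coxeter polynomials of the \emph{simply-laced} system $\widehat R$ (provided $G$ is not cyclic of odd order, which holds for all rows of Table~\ref{Tab1}). So the real content of Stekolshchik's theorem is the purely combinatorial statement that
$$\frac{\phi_{\widehat R}(t)}{\psi_{\widehat R}(t)} = \frac{\phi_{R^\vee}(t)}{\psi_{R^\vee}(t)},$$
i.e.\ that passing from $\widehat R$ to its folding $R^\vee$ leaves the quotient of the Coxeter polynomial by the affine Coxeter polynomial unchanged.

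The last and main step is therefore to verify that identity of rational functions. I would do this by the standard comparison of Coxeter numbers and exponents under folding: a simply-laced system $\widehat R$ and its folded descendant have the same Coxeter number $h$, and the characteristic polynomial of a Coxeter element is $\prod_j (t - e^{2\pi i m_j/h})$ over the exponents $m_j$; the affine Coxeter polynomial is obtained by adjoining the factor corresponding to the highest root / affine node. One checks case by case (using Table~\ref{Tab1}, whose rows are few) that the \emph{multiset of exponents modulo the relevant symmetry}, together with the affine correction factor, produces identical numerator-over-denominator quotients for $\widehat R$ and $R^\vee$ — many individual factors $(1-t^k)$ cancel between $\phi$ and $\psi$, and what survives depends only on $h$ and a short list of degrees that is preserved by folding. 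I expect this case check — matching the surviving cyclotomic factors on the two sides for each pair $(\widehat R, R^\vee)$ in the table — to be the one genuinely laborious point; everything upstream is formal manipulation of characters and the invocation of Theorems~\ref{theoEb} and Slodowy's folding result. Finally I would substitute $t\mapsto t^2$ to recover the stated form $P_G(t)=\phi_{R^\vee}(t^2)/\psi_{R^\vee}(t^2)$, completing the proof.
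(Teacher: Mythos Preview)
The paper does not give its own proof of Theorem~\ref{Thm:St}; the result is simply quoted from Stekolshchik~\cite{St06}, so there is no in-paper argument to compare against directly.

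Your reduction is valid. Each $G$ in Table~\ref{Tab1} satisfies the hypothesis of Theorem~\ref{theoEb}, so $P_G(t)=\phi_{\widehat R}(t^2)/\psi_{\widehat R}(t^2)$ with $\widehat R$ the simply-laced root system attached to $G$ by the ordinary McKay correspondence ($A_{2n-1}$, $D_{n+1}$, $E_6$, $D_4$ in the four rows). The statement then boils down to the rational-function identity $\phi_{\widehat R}/\psi_{\widehat R}=\phi_{R^\vee}/\psi_{R^\vee}$ in each case, and the frame shapes recorded in Table~\ref{Tab1} make this a short direct check (the column $p_X$ is the left-hand side, and the columns $\phi_R$, $\psi_R$ in the row whose $R$ equals the desired $R^\vee$ give the right-hand side). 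One caveat: the heuristic you invoke, that exponents are preserved under folding, does not by itself settle the matter, since for instance $\phi_{A_{2n-1}}=2n/1$ and $\phi_{C_n}=2n/n$ are genuinely different polynomials --- it is only the \emph{quotients} $\phi/\psi$ that coincide. So the case-by-case verification you flag as the laborious step is in fact the entire content, not merely a bookkeeping afterthought. For what it is worth, the paper later (opening of Section~4) recovers the cases $B_n$, $C_n$, $F_4$ by a somewhat different route, as a corollary of Theorem~\ref{thmhyp} and Proposition~\ref{Prop:bound} via the boundary-singularity interpretation rather than via Theorem~\ref{theoEb}.
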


\begin{table}
$$
\begin{tabular}{|c|c|c|c|c|c|c|}
\hline
$G$ & $X=\CC^2/G$ & $H$ & $R$ & $p_X$ & $\psi_R$ & $\phi_R$ \\
\hline
$\calC_{2n}$ & $A_{2n-1}$ & $\calD_n$ & $B_n$ & $2n/1 \cdot n \cdot n$ & $2 \cdot (n-1)$ & $2n/n$  \\
$\calD_{n-1}$ & $D_{n+1}$ & $\calD_{2(n-1)}$ & $C_n$ & $2n/2 \cdot (n-1) \cdot n$ & $1 \cdot n$ & $2n/n$ \\
$\calT$ & $E_6$ & $\calO$ & $F_4$ & $12/3 \cdot 4 \cdot 6$  & $2 \cdot 3$ & $2 \cdot 12/4 \cdot 6$ \\
\hline
$\calD_2$ & $D_4$ & $\calT$ & $G_2$ & $6/2 \cdot 2 \cdot 3$ & $1 \cdot 2$ & $1 \cdot 6/2 \cdot 3$\\
\hline
\end{tabular} 
$$
\caption{Generalized McKay correspondence}
\label{Tab1}
\end{table}
In Table~\ref{Tab1} we use the following notation: $\calC_{2n}$ denotes the cyclic group of order $2n$ ($n \geq 1$), $\calD_k$ the binary dihedral group of order $4k$ ($k \geq 2$), $\calT$ the binary tetrahedral group, and $\calO$ the binary octahedral group.  A rational function
$\prod_{k | d} (1-t^k)^{\alpha_k}$ ($\alpha_k \in \ZZ$)
is represented by its {\em Frame shape} $\prod_{k | d} k^{\alpha_k}$.  We have indicated $p_X$ ($P_G(t)=p_X(t^2)$), $\psi_R$, and $\phi_R$. Note that $B_n^\vee=C_n$, $C_n^\vee=B_n$, $F_4^\vee=F_4$, and $G_2^\vee=G_2$.

We want to interpret Theorem~\ref{Thm:St} in terms of singularities. The group $H/G$ acts in a natural way on the quotient $X=\CC^2/G$. The pair $(X, H/G)$ is a singularity with symmetry. In particular, if $H/G \cong \ZZ_2$ then the pair $(X, H/G)$ is a simple boundary singularity in the sense of Arnold \cite{A78}. We shall consider such singularities in the next section.

\section{Boundary singularities}
Let $\overline{f}$ be a singularity of a function on a manifold with boundary. This means that $\overline{f}$ is the germ of a holomorphic function $\overline{f} : (\CC^{n+1},0) \to (\CC,0)$ such that the function $\overline{f}$ has an isolated critical point at zero both in the space $\CC^{n+1}$ and on the subspace $\CC^n$ given by the equation $x_0=0$, where $x_0,x_1, \ldots , x_n$ are the coordinates of $\CC^{n+1}$. Then $\overline{f}$ defines a germ of a function $f : (\CC^{n+1},0) \to (\CC,0)$ invariant under the involution $\sigma: (x_0,x_1, \ldots, x_n) \mapsto (-x_0, x_1, \ldots , x_n)$ by
$$f(x_0,x_1, \ldots , x_n) := \overline{f}(x_0^2, x_1, \ldots , x_n).$$
Let $(X,0)$ be the singularity defined by $f=0$. We call it the {\em ambient singularity} of the boundary singularity $\overline{f}$. The involution $\sigma$ defines  a natural $\ZZ_2$-action on $(X,0)$. From now on we shall assume $n=2$ and we denote the coordinates of $\CC^3$ by $x,y,z$.

We recall some facts about Coxeter-Dynkin diagrams and monodromy of boundary singularities from \cite{AGV88}.  The middle homology group $H_2(X_\lambda)=H_2(X_\lambda;\ZZ)$ of a Milnor fibre $X_\lambda$ of $(X,0)$ splits as an orthogonal direct sum
$$H_2(X_\lambda) = H^+ \oplus H^-,$$
where $H^+$ and $H^-$ are the subgroups consisting of invariant and antiinvariant cycles respectively with respect to $\sigma_\ast : H_2(X_\lambda) \to H_2(X_\lambda)$. 
There exists a basis $\delta^{(1)}_1, \ldots, \delta^{(1)}_{\mu_0}, \delta^{(2)}_1, \ldots , \delta^{(2)}_{\mu_0}, \delta'_1, \ldots, \delta'_{\mu_1}$ of vanishing cycles of $H_2(X_\lambda)$ such that 
$$\sigma_\ast(\delta^{(1)}_i) = \delta^{(2)}_i, \ i=1, \ldots , \mu_0; \quad \sigma_\ast(\delta'_j)=-\delta'_j, \ j=1, \ldots , \mu_1,$$
and the intersection matrix with respect to this basis has the form
$$\left( \begin{array}{ccc} A & 0 & B \\
                                              0  & A & -B \\
                                              B^t   & -B^t  & A' \end{array} \right)$$
where $A$ and $A'$ are the intersection matrices with respect to the elements $\delta^{(1)}_1, \ldots, \delta^{(1)}_{\mu_0}$ and $\delta'_1, \ldots, \delta'_{\mu_1}$ respectively and $B$ is the matrix 
$(\langle  \delta^{(1)}_i, \delta'_j \rangle)_{i=1, \ldots \mu_0}^{j=1, \ldots , \mu_1}$ ($\langle \cdot , \cdot \rangle$ denotes the intersection form on $H_2(X_\lambda)$). 
Set
$$\widehat{\delta}_i := \delta^{(1)}_i - \delta^{(2)}_i, \quad i=1, \ldots , \mu_0.$$
Then the sublattice $H^-$ of $H_2(X_\lambda)$ spanned by $\widehat{\delta}_1, \ldots ,\widehat{\delta}_{\mu_0}, \delta'_1, \ldots, \delta'_{\mu_1}$ can be considered as the Milnor lattice of the boundary singularity $\overline{f}$. 
The Coxeter-Dynkin diagram of the boundary singularity $\overline{f}$ corresponding to this basis is obtained by {\em folding} the Coxeter-Dynkin diagram corresponding to the basis  $\delta^{(1)}_1, \ldots, \delta^{(1)}_{\mu_0}, \delta^{(2)}_1, \ldots , \delta^{(2)}_{\mu_0}, \delta'_1, \ldots, \delta'_{\mu_1}$ (see Figure \ref{Fig2}). 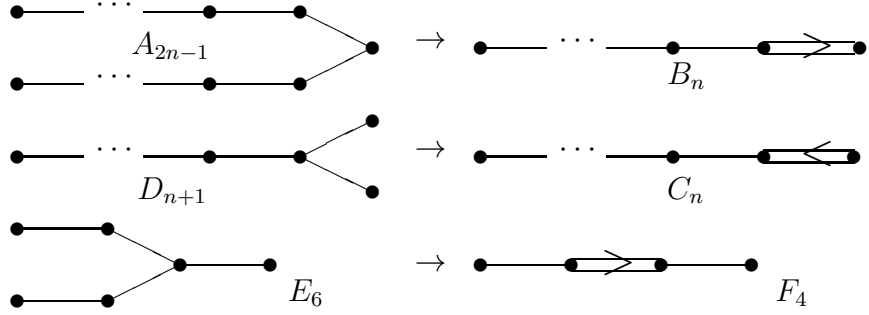
\begin{figure}
$$
\unitlength0.8cm
\begin{picture}(15,5) 
\put(0.1,3.6){\circle*{0.2}}
\put(0.2,3.6){\line(1,0){1}}
\put(1.4,3.6){$\cdots$}
\put(2.2,3.6){\line(1,0){1}}
\put(3.3,3.6){\circle*{0.2}}
\put(3.4,3.6){\line(1,0){1.3}}
\put(4.8,3.6){\circle*{0.2}}
\put(2,4.1){$A_{2n-1}$}
\put(0.1,4.8){\circle*{0.2}}
\put(0.2,4.8){\line(1,0){1}}
\put(1.4,4.8){$\cdots$}
\put(2.2,4.8){\line(1,0){1}}
\put(3.3,4.8){\circle*{0.2}}
\put(3.4,4.8){\line(1,0){1.3}}
\put(4.8,4.8){\circle*{0.2}}
\put(4.8,3.6){\line(2,1){1.2}}
\put(4.8,4.8){\line(2,-1){1.2}}
\put(6,4.2){\circle*{0.2}}
\put(6.7,4.2){$\rightarrow$}
\put(7.8,4.2){\circle*{0.2}}
\put(7.9,4.2){\line(1,0){1}}
\put(9.1,4.2){$\cdots$}
\put(9.9,4.2){\line(1,0){1}}
\put(11,4.2){\circle*{0.2}}
\put(11.1,4.2){\line(1,0){1.3}}
\put(12.5,4.2){\circle*{0.2}}
\put(12.5,4.1){\line(1,0){1.5}}
\put(12.5,4.3){\line(1,0){1.5}}
\put(13.1,4.05){\Large $>$}
\put(14.1,4.2){\circle*{0.2}}
\put(10.9,3.6){$B_n$}
\put(0.1,2.4){\circle*{0.2}}
\put(0.2,2.4){\line(1,0){1}}
\put(1.4,2.4){$\cdots$}
\put(2.2,2.4){\line(1,0){1}}
\put(3.3,2.4){\circle*{0.2}}
\put(3.4,2.4){\line(1,0){1.3}}
\put(4.8,2.4){\circle*{0.2}}
\put(4.8,2.4){\line(2,1){1.2}}
\put(4.8,2.4){\line(2,-1){1.2}}
\put(6,3){\circle*{0.2}}
\put(6,1.8){\circle*{0.2}}
\put(2.1,1.7){$D_{n+1}$}
\put(6.7,2.4){$\rightarrow$}
\put(7.8,2.4){\circle*{0.2}}
\put(7.9,2.4){\line(1,0){1}}
\put(9.1,2.4){$\cdots$}
\put(9.9,2.4){\line(1,0){1}}
\put(11,2.4){\circle*{0.2}}
\put(11.1,2.4){\line(1,0){1.3}}
\put(12.5,2.4){\circle*{0.2}}
\put(12.5,2.3){\line(1,0){1.5}}
\put(12.5,2.5){\line(1,0){1.5}}
\put(13.1,2.25){\Large $<$}
\put(14,2.4){\circle*{0.2}}
\put(10.9,1.7){$C_n$}
\put(0.1,0){\circle*{0.2}}
\put(0.2,0){\line(1,0){1.3}}
\put(1.6,0){\circle*{0.2}}
\put(0.1,1.2){\circle*{0.2}}
\put(0.2,1.2){\line(1,0){1.3}}
\put(1.6,1.2){\circle*{0.2}}
\put(1.6,0){\line(2,1){1.2}}
\put(1.6,1.2){\line(2,-1){1.2}}
\put(2.8,0.6){\circle*{0.2}}
\put(2.8,0.6){\line(1,0){1.5}}
\put(4.3,0.6){\circle*{0.2}}
\put(4.6,0){$E_6$}
\put(6.7,0.5){$\rightarrow$}
\put(7.8,0.6){\circle*{0.2}}
\put(7.9,0.6){\line(1,0){1.3}}
\put(9.3,0.6){\circle*{0.2}}
\put(9.3,0.5){\line(1,0){1.5}}
\put(9.3,0.7){\line(1,0){1.5}}
\put(9.8,0.45){\Large $>$}
\put(10.8,0.6){\circle*{0.2}}
\put(10.8,0.6){\line(1,0){1.5}}
\put(12.3,0.6){\circle*{0.2}}
\put(12.7,0){$F_4$}
\end{picture} 
$$
\caption{Coxeter-Dynkin diagrams of $B_n$, $C_n$, and $F_4$ obtained by folding}
\label{Fig2}
\end{figure}
Note that
$$\langle \widehat{\delta}_i , \widehat{\delta}_i  \rangle = -4, \quad \langle \widehat{\delta}_i , \widehat{\delta}_j \rangle = 2 \langle \delta^{(1)}_i , \delta^{(1)}_j \rangle  \mbox{ for } 1 \leq i,j \leq \mu_0, i \neq j,$$
$$\langle \widehat{\delta}_i, \delta'_j \rangle = 2 \langle \delta^{(1)}_i, \delta'_j \rangle \mbox{ for } 1 \leq i \leq \mu_0, 1 \leq j \leq \mu_1.$$
Therefore the reflection $s_{\widehat{\delta}_i}$ corresponding to an element $\widehat{\delta}_i$ ($1 \leq i \leq \mu_0$), 
$$s_{\widehat{\delta}_i}(x) = x - \frac{2 \langle x, \widehat{\delta}_i \rangle}{\langle \widehat{\delta}_i, \widehat{\delta}_i \rangle} \widehat{\delta}_i \quad \mbox{for } x \in H^-,
$$
is a well defined operator $s_{\widehat{\delta}_i} : H^- \to H^-$.
The Coxeter element (product of reflections) corresponding to the basis $\widehat{\delta}_1, \ldots ,\widehat{\delta}_{\mu_0}, \delta'_1, \ldots, \delta'_{\mu_1}$ of $H^-$ can be considered as the monodromy of the boundary singularity.
Let $\phi_{\overline{f}}$ be its characteristic polynomial. Let $\phi_1$ be the characteristic polynomial of the Coxeter element corresponding to $\widehat{\delta}_1, \ldots ,\widehat{\delta}_{\mu_0}$. 

\begin{proposition} \label{Prop:bound}
One has
$$\phi_{\overline{f}} = \frac{\phi_X}{\phi_1}.$$
\end{proposition}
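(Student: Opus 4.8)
The plan is to read off both $\phi_1$ and $\phi_{\overline f}$ from the restriction of the monodromy $c$ of $X$ to the two summands $H^+$ and $H^-$. The geometric monodromy of $X$ is $\sigma$-equivariant, so $c$ commutes with $\sigma_\ast$ and hence respects the orthogonal splitting $H_2(X_\lambda)=H^+\oplus H^-$; consequently $\phi_X=\phi_{c|H^+}\cdot\phi_{c|H^-}$, and it suffices to prove $\phi_{c|H^+}=\phi_1$ and $\phi_{c|H^-}=\phi_{\overline f}$.

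First I would bring $c$ into a convenient form. By the Picard--Lefschetz formula, $c$ is the product of the reflections $s_{\delta^{(1)}_i}$, $s_{\delta^{(2)}_i}$, $s_{\delta'_j}$ taken in the order of the distinguished basis. Since the off-diagonal block of the intersection matrix vanishes, $\langle\delta^{(1)}_i,\delta^{(2)}_j\rangle=0$ for all $i,j$, so each $s_{\delta^{(2)}_k}$ commutes with every $s_{\delta^{(1)}_i}$; sliding each $s_{\delta^{(2)}_k}$ leftward past $s_{\delta^{(1)}_{k+1}},\dots,s_{\delta^{(1)}_{\mu_0}}$ rewrites
$$c=\prod_{i=1}^{\mu_0}\bigl(s_{\delta^{(1)}_i}s_{\delta^{(2)}_i}\bigr)\cdot\prod_{j=1}^{\mu_1}s_{\delta'_j},$$
in which the two factors of each pair $P_i:=s_{\delta^{(1)}_i}s_{\delta^{(2)}_i}$ also commute, because $\langle\delta^{(1)}_i,\delta^{(2)}_i\rangle=0$.

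Next I would restrict the pair $P_i$ to each summand. Put $\check\delta_i:=\delta^{(1)}_i+\delta^{(2)}_i\in H^+$; then $\{\check\delta_i\}$ is a basis of $H^+$ whose Gram matrix equals $2A$, the same as the Gram matrix of $\{\widehat\delta_i\}$ inside $H^-$. A short computation on the basis vectors $\widehat\delta_k,\delta'_l$ of $H^-$ and $\check\delta_k$ of $H^+$, using $\langle\delta^{(1)}_i,\delta^{(1)}_i\rangle=-2$, $\langle\delta^{(1)}_i,\delta^{(2)}_j\rangle=0$, and the intersection-number relations stated just before the proposition, shows that $P_i$ restricts on $H^-$ to the reflection $s_{\widehat\delta_i}$ and on $H^+$ to the reflection $s_{\check\delta_i}$; moreover $s_{\delta'_j}$ restricts to $s_{\delta'_j}$ on $H^-$ and to the identity on $H^+$, since $\delta'_j\perp H^+$. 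Therefore $c|H^-=\prod_i s_{\widehat\delta_i}\cdot\prod_j s_{\delta'_j}$ is precisely the Coxeter element defining the monodromy of $\overline f$, so $\phi_{c|H^-}=\phi_{\overline f}$; and $c|H^+=\prod_i s_{\check\delta_i}$ is the Coxeter element of the lattice $(H^+,\{\check\delta_i\})$, which is isometric, via $\check\delta_i\mapsto\widehat\delta_i$, to the sublattice of $H^-$ spanned by $\widehat\delta_1,\dots,\widehat\delta_{\mu_0}$, so $\phi_{c|H^+}=\phi_1$. Multiplying the two characteristic polynomials gives $\phi_X=\phi_1\,\phi_{\overline f}$, which is the assertion.

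The two reflection identities are routine linear algebra; the steps that need genuine care are the regrouping of $c$ --- this is exactly where the mutual orthogonality $\langle\delta^{(1)}_i,\delta^{(2)}_j\rangle=0$ of the two ``copies'' enters --- and the statement that $c$ commutes with $\sigma_\ast$ (equivalently, that the regrouped product is unchanged when the $\delta^{(1)}$- and $\delta^{(2)}$-blocks are swapped), which is what legitimizes the factorization $\phi_X=\phi_{c|H^+}\phi_{c|H^-}$. I expect this equivariance/regrouping point to be the only real obstacle; everything after it is bookkeeping on the Milnor lattice.
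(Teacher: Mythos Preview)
Your argument is correct, but it follows a different route from the paper's. The paper does not invoke equivariance or the $H^+\oplus H^-$ splitting at all; it appeals directly to Bourbaki's determinantal formula for the characteristic polynomial of a Coxeter element (Chap.~V, \S6, Exercice~3), writing $\phi_X(t)$ as the determinant of the $3\times 3$ block matrix obtained from the intersection matrix by replacing each block $M$ by its ``$(-V^t - tV)$''-deformation. A single block row/column operation (add the second block column to the first, then subtract the first block row from the second) makes the matrix block-triangular, and the two diagonal blocks are read off as $\phi_1(t)$ and $\phi_{\overline f}(t)$ by the same Bourbaki formula. So the paper's proof is two lines of linear algebra once the formula is quoted.

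Your approach is more conceptual: you explain \emph{why} the factorisation holds, namely because the monodromy $c$ preserves the $\sigma_\ast$-eigenspaces and its restrictions to $H^\pm$ are themselves Coxeter elements for the folded and the ``symmetric'' sublattices. One small remark: you flag the commutation of $c$ with $\sigma_\ast$ as the delicate point, but in fact you never need it as a separate input. Once you have regrouped $c=\prod_i P_i\cdot\prod_j s_{\delta'_j}$, each factor $P_i=s_{\delta^{(1)}_i}s_{\delta^{(2)}_i}$ visibly commutes with $\sigma_\ast$ (it is symmetric in the two orthogonal roots it swaps), and each $s_{\delta'_j}$ does too since $\delta'_j\in H^-$; hence the regrouped product already preserves $H^\pm$, and the block factorisation of $\phi_X$ follows without any appeal to the geometric equivariance of the monodromy. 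The paper's determinant manipulation is, in effect, the matrix shadow of exactly this regrouping.
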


\begin{proof} 
Write $A = - V^t -V$, $A'= -V'^t - V'$ for upper triangular matrices $V$, $V'$ with diagonal entries equal to  1. Then the formula of \cite[Chap.~V, \S 6, Exercice 3]{B68} for the characteristic polynomial of a Coxeter element yields
\begin{eqnarray*}
\phi_X(t) & = & \left| \begin{array}{ccc} -V^t - tV & 0 & tB \\
                                                                      0   & -V^t - tV & -tB \\
                                                                     B^t &  -B^t & -V'^t -tV' \end{array} \right| \\
& = & \det (-V^t - tV) \left| \begin{array}{cc} -V^t - tV & -2tB \\ -B^t & -V'^t -tV' \end{array} \right| = \phi_1(t) \phi_{\overline{f}}(t).
\end{eqnarray*}
\end{proof}

If $\overline{f}$ is a boundary singularity  then there is a dual boundary singularity  defined by the Lagrange transform $\overline{f}^\vee$ of $\overline{f}$. This is defined as follows (see, e.g., \cite{A93}). Let $\widetilde{f}(x,y,z,p) = px+ \overline{f}(x,y,z)$ be the function in an additional variable $p$ with the equation of the boundary being $p=0$. Then $\widetilde{f}$ can be written as $\widetilde{f}=\overline{f}^\vee(x',y'z')+w^2$ for a function $\overline{f}^\vee$ with an isolated singularity on the boundary $x'=0$. S.~Chmutov and I.~Scherback \cite{CS94} have proved the following theorem.

\begin{theorem}[Chmutov, Scherback] \label{Thm:CS}
Lagrange dual boundary singularities have dual Coxeter-Dynkin diagrams (in the sense that the arrows are reverted). 
\end{theorem}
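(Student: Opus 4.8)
The plan is to establish Theorem~\ref{Thm:CS} by carrying the Legendre transform interpretation of the Lagrange transform down to the level of a distinguished basis of vanishing cycles, in two stages: an elementary normal-form step, which accounts for the two ``diagonal'' parts of the two Coxeter-Dynkin diagrams, and then a Picard-Lefschetz step, which accounts for the bonds between the two classes of vertices.

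For the first stage, write $\overline{f}(x,y,z)=\overline{f}_0(y,z)+x\cdot h(x,y,z)$ with $\overline{f}_0=\overline{f}|_{\{x=0\}}$, so that $\widetilde{f}=px+\overline{f}(x,y,z)=\bigl(p+h(x,y,z)\bigr)\,x+\overline{f}_0(y,z)$. The substitution $u=x+p+h$, $v=x-p-h$ (a local biholomorphism of $(\CC^{4},0)$ fixing $y$ and $z$) turns this into $\widetilde{f}=\frac14(u^{2}-v^{2})+\overline{f}_0(y,z)$, which exhibits $\widetilde{f}$ as a stabilization of $\overline{f}|_{\{x=0\}}$; since $\widetilde{f}=\overline{f}^{\vee}(x',y',z')+w^{2}$ by definition, $\overline{f}^{\vee}$ is stably equivalent to $\overline{f}|_{\{x=0\}}$, and restricting $\widetilde{f}=\overline{f}^{\vee}+w^{2}$ to the boundary $\{x'=0\}=\{p=0\}$ shows that $\overline{f}^{\vee}|_{\{x'=0\}}$ is stably equivalent to $\overline{f}$. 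Thus the Lagrange transform interchanges the two Milnor numbers $\mu_0=\mu(\overline{f})$ and $\mu_1=\mu(\overline{f}|_{\{x=0\}})$, and interchanges the two diagonal blocks $A$ (the Coxeter-Dynkin diagram of $\overline{f}$) and $A'$ (the diagram of $\overline{f}|_{\{x=0\}}$) of the folded intersection matrix $\left(\begin{array}{cc}2A & 2B\\ 2B^{t} & A'\end{array}\right)$ of $H^{-}$ that one reads off from the relations in the display before Proposition~\ref{Prop:bound}. Since the $\mu_0$ vertices $\widehat{\delta}_i$ have self-intersection $-4$ while the $\mu_1$ vertices $\delta'_j$ have self-intersection $-2$, and the bonds between the two classes carry the extra factor $2$ coming from $\langle\widehat{\delta}_i,\delta'_j\rangle=2\langle\delta^{(1)}_i,\delta'_j\rangle$, swapping the two classes of vertices is exactly ``reverting all arrows'' in the boundary diagram. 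After this stage Theorem~\ref{Thm:CS} is reduced to showing that the off-diagonal block $B=(\langle\delta^{(1)}_i,\delta'_j\rangle)$ of $\overline{f}$ goes over to the transpose of the corresponding block of $\overline{f}^{\vee}$, \emph{together with} a matching of the two distinguished bases under the identifications above.

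For the second stage I would re-run the normal-form computation at the level of a $\sigma$- and $\sigma'$-equivariant morsification. Choose a morsification $\overline{f}_s$ of $\overline{f}$ in general position with respect to the boundary; it induces the $\sigma$-equivariant morsification $f_s=\overline{f}_s(x^{2},y,z)$ of the ambient singularity of $\overline{f}$, hence a distinguished basis $\delta^{(1)}_i,\delta^{(2)}_i,\delta'_j$ of the Milnor lattice of $f$, and simultaneously the deformation $\widetilde{f}_s=px+\overline{f}_s$, whose Milnor fibre, via the coordinate change of the first stage, is identified with that of a stabilization of a morsification of $\overline{f}|_{\{x=0\}}$. Tracking the off-axis and the on-axis critical points through this identification, and then through a fixed choice of the square-splitting $\widetilde{f}_s=\overline{f}^{\vee}_{s}+w^{2}$, produces a distinguished basis on the $\overline{f}^{\vee}$-side, and a Picard-Lefschetz computation along a compatibly chosen system of vanishing paths gives that the resulting off-diagonal block is the transpose of the one we started with; combined with the first stage this is the theorem.

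The main obstacle is exactly this second stage, and within it the step of splitting off the square $w^{2}$ compatibly with the boundary. After the hyperbolic change of coordinates the boundary $\{p=0\}$ is no longer a coordinate hyperplane but a smooth hypersurface tangent to one to high order (as one already sees for the simple series $B_\mu\leftrightarrow C_\mu$), so straightening it and then isolating a quadratic summand not involving the boundary is a genuinely coordinate-dependent operation; keeping control of a \emph{distinguished} system of vanishing paths through it, rather than of the abstract Milnor lattice alone, requires a braid-monodromy analysis of the kind carried out by Gabrielov \cite{G75} and in \cite{AGV88}. A way to sidestep this is to observe that both the Lagrange transform and arrow-reversal behave functorially under the elementary operations (suspension, stabilization, and the standard modifications of boundary singularities) relating the boundary singularities in question to a short explicit list --- the simple series $B_\mu$, $C_\mu$, $F_4$ and the twelve exceptional unimodal boundary singularities --- for each member of which both Coxeter-Dynkin diagrams can be written down by hand and the duality checked by inspection (compare Figure~\ref{Fig2}).
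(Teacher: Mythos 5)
The paper does not prove Theorem~\ref{Thm:CS} at all: it is quoted verbatim from Chmutov and Scherback \cite{CS94}, where the proof occupies a full article. So your proposal has to be judged on its own. Your first stage is correct and is indeed the standard entry point: the hyperbolic coordinate change $u=x+p+h$, $v=x-p-h$ does exhibit $\widetilde{f}$ as a stabilization of $\overline{f}|_{\{x=0\}}$, hence $\overline{f}^\vee$ is stably equivalent to $\overline{f}|_{\{x=0\}}$ and $\overline{f}^\vee|_{\{x'=0\}}$ to $\overline{f}$, so the Lagrange transform swaps $\mu_0$ and $\mu_1$ and (at the level of abstract Milnor lattices) the two diagonal blocks. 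That much is consistent with the block structure recorded before Proposition~\ref{Prop:bound}.

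The genuine gap is your second stage, which is not an implementation detail but the entire content of the theorem, and you leave it unproved. A Coxeter--Dynkin diagram is attached to a \emph{distinguished} basis, and stable equivalence of germs only gives an isometry of lattices; to conclude that the arrows are reverted you must produce distinguished bases on both sides, compatible with the respective boundaries, for which the off-diagonal block of $\overline{f}^\vee$ is (up to the admissible change-of-basis moves) the transpose of that of $\overline{f}$. You correctly identify the obstruction --- after the hyperbolic rotation the boundary $\{p=0\}$ is no longer adapted to the splitting $\overline{f}^\vee+w^2$, so one cannot simply transport a system of vanishing paths --- but you do not overcome it; saying that it ``requires a braid-monodromy analysis of the kind carried out by Gabrielov'' names the difficulty without resolving it. The proposed sidestep is also insufficient: the theorem is a statement about \emph{all} boundary singularities, whereas a case-by-case verification only covers the simple and unimodal lists (and even there, writing down the diagram of, say, $\overline{f}^\vee$ ``by hand'' presupposes exactly the kind of computation you are trying to avoid). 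For the purposes of this paper only Corollary~\ref{Cor:dual}, applied to the finitely many singularities in Table~\ref{Tab2}, is needed, so your fallback would salvage the application but not the theorem as stated.
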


\begin{corollary} \label{Cor:dual}
The characteristic polynomials of the monodromy of dual boundary singularities are equal.
\end{corollary}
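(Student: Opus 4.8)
The plan is to read the result off from Theorem~\ref{Thm:CS}: its content is that the Coxeter--Dynkin diagrams of $\overline f$ and of its Lagrange transform $\overline f^\vee$ agree as decorated graphs, except that every arrow is reversed. In particular they have the same vertices, so $\phi_{\overline f}$ and $\phi_{\overline f^\vee}$ are monic of the same degree $\mu^-=\mu_0+\mu_1$, and it suffices to show that reversing all arrows does not change the characteristic polynomial of the associated Coxeter element (the monodromy).

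The key computation is to see what reversing arrows does algebraically. Over the distinguished basis, form the matrix $K$ with entries $K_{ij}=2\langle e_i,e_j\rangle/\langle e_i,e_i\rangle$; the un-arrowed edges of the diagram are symmetric, so the arrows record exactly the asymmetry of $K$, and passing to $\overline f^\vee$ replaces $K$ by its transpose $K^t$. Write $K=U+L$ with $U$ upper triangular and $L$ lower triangular, both with $1$'s on the diagonal. The Bourbaki formula for the characteristic polynomial of a Coxeter element --- the same one used in the proof of Proposition~\ref{Prop:bound} --- gives $\phi_{\overline f}(t)=\det(tU+L)$. Hence
$$\phi_{\overline f^\vee}(t)=\det(tL^t+U^t)=\det\bigl((tL+U)^t\bigr)=\det(tL+U)=t^{\mu^-}\phi_{\overline f}(1/t),$$
and the corollary will follow once $\phi_{\overline f}$ is shown to be palindromic.

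To prove $\phi_{\overline f}(t)=t^{\mu^-}\phi_{\overline f}(1/t)$ I would argue as follows. The Coxeter element $c$ is a product of $\mu^-$ reflections of $H^-$, each of determinant $-1$, so $\phi_{\overline f}(0)=\det(-c)=1$; $c$ is an automorphism of the lattice $H^-$, so $\phi_{\overline f}$ has integer, in particular real, coefficients; and $c$ is the monodromy of $\overline f$, so by the monodromy theorem its eigenvalues are roots of unity. Since $\overline\lambda=\lambda^{-1}$ for a root of unity $\lambda$, the multiset of roots of $\phi_{\overline f}$, being closed under complex conjugation, is closed under inversion; together with $\phi_{\overline f}(0)=1$ and monicity this gives the palindromy. (One may also deduce palindromy from Proposition~\ref{Prop:bound}: $\phi_{\overline f}=\phi_X/\phi_1$ is a ratio of palindromic polynomials, $\phi_X$ being the monodromy polynomial of the hypersurface $(X,0)$ and $\phi_1$ a Coxeter-element polynomial.) Combining with the displayed identity yields $\phi_{\overline f^\vee}=\phi_{\overline f}$.

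The step to watch is the passage ``reversing arrows $=$ replacing $K$ by $K^t$'': Theorem~\ref{Thm:CS} furnishes distinguished bases of $\overline f$ and $\overline f^\vee$ with arrow-reversed diagrams, and one must check that evaluating the Bourbaki formula on the dual basis really does amount to transposing $K$. This is where one uses that the characteristic polynomial of the Coxeter element attached to a distinguished basis is independent of the choice of distinguished basis (the braid group acts transitively on distinguished bases and fixes the Coxeter element up to conjugacy); granting this, everything reduces to the two matrix identities above, and the remaining verifications are routine.
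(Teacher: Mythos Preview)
Your argument is correct and fleshes out what the paper leaves as a one-line assertion (``one can easily see''): both you and the paper reduce the corollary, via Theorem~\ref{Thm:CS}, to the claim that reversing all arrows in a Coxeter--Dynkin diagram does not change the characteristic polynomial of the Coxeter element.

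Your route through palindromy works, but it is a detour, and the two justifications you give for palindromy each carry a small burden (invoking the monodromy theorem for the boundary monodromy in the first, and the palindromy of $\phi_1$---which needs non-degeneracy of the restricted form or a separate argument---in the second). There is a shorter path that avoids this step entirely. Since the diagram arises from the \emph{symmetric} intersection form on $H^-$, your matrix $K$ is symmetrizable: writing $D=\mathrm{diag}\bigl(\langle e_i,e_i\rangle/2\bigr)$ one has $K=D^{-1}G$ with $G$ the Gram matrix, hence
\[
K^t \;=\; G\,D^{-1} \;=\; D\,(D^{-1}G)\,D^{-1} \;=\; D\,K\,D^{-1}.
\]
Because $D$ is diagonal, conjugation by $D$ preserves the upper/lower triangular decomposition, so in your notation $U'=L^t=DUD^{-1}$ and $L'=U^t=DLD^{-1}$. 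Plugging into the Bourbaki determinant gives
\[
\phi_{\overline f^\vee}(t)=\det(tL'+U')=\det\!\bigl(D(tL+U)D^{-1}\bigr)=\det(tL+U)=\phi_{\overline f}(t)
\]
directly. Equivalently, each reflection $s_i$ satisfies $D s_i D^{-1}=s_i'$, so the two Coxeter elements are conjugate. This is presumably what the paper has in mind by ``one can easily see.''
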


\begin{proof} One can easily see that the reversion of arrows in the Coxeter-Dynkin diagram does not change the characteristic polynomial of the corresponding Coxeter element.
\end{proof}

\section{Simple and unimodal boundary singularities}
The simple and unimodal boundary singularities have been classified by Arnold and V.~I.~Matov. The simple boundary singularities are the singularities $B_n, C_n, F_4$ \cite{A78} where the notation is derived from a  corresponding Coxeter-Dynkin diagram. The singularities $B_n$ are dual to the singularities $C_n$ and $F_4$ is self-dual.

\begin{table}[h]
$$
\begin{tabular}{|c|c|c|c|c|c|}
\hline
 Name & Ambient & $p_X$ & $\psi_{\overline{f}}$  & $\phi_{\overline{f}}$ & Dual\\
\hline
$B_n$ & $A_{2n-1}$ & $2n/1 \cdot n \cdot n$ & $1 \cdot n$ & $2n/n$ & $C_n$\\
$C_n$ & $D_{n+1}$ & $2n/2 \cdot (n-1) \cdot n$ & $2 \cdot (n-1)$ & $2n/n$ & $B_n$ \\
$F_4$ & $E_6$ & $12/3 \cdot 4 \cdot 6$ & $2 \cdot 3$ & $2 \cdot 12/4 \cdot 6$ & $F_4$\\
\hline
$F_{1,0}$ & $\widetilde{E}_8$ & $6/1 \cdot 2 \cdot 3$ &  & $3 \cdot  3$ & $L_6=D_{4,1}$ \\
$K_{4,2}$ & $\widetilde{E}_7$ & $4/1 \cdot 1 \cdot 2$ &  & $2 \cdot  4$ & $K_{4,2}$\\
$L_6=D_{4,1}$ & $\widetilde{E}_6$ & $3/1 \cdot 1 \cdot 1$ &  & $3 \cdot  3$ & $F_{1,0}$\\
\hline
$F_8$ & $E_{14}$ & $24/3 \cdot 8 \cdot 12$ & $3 \cdot 4$ & $4 \cdot 24/8 \cdot 12$ & $E_{6,0}$\\
$F_9$ & $J_{3,0}$ & $18/2 \cdot  6 \cdot  9$ & $2 \cdot 6$ & $18/9$ & $E_{7,0}$\\
$F_{10}$ & $E_{18}$ & $30/3 \cdot  10 \cdot  15$ &  & $15/5$ & $E_{8,0}$\\
$K_8^*$ & $W_{13}$ & $16/ 3 \cdot 4 \cdot  8$ & $3 \cdot  4$ & $16/8$ & $D_5^1$\\
$K_9^*$ & $W_{1,0}$ & $12/2 \cdot  3 \cdot  6$ &  $2 \cdot 6$ & $12/3$ & $E_{6,1}$ \\
$K_8^{**}$ & $W_{12}$ & $20/4 \cdot  5 \cdot  10$ & $2 \cdot  5$ & $2 \cdot  20/4 \cdot  10$ & $K_8^{**}$ \\
$E_{6,0}$ & $Q_{10}$ & $24/6 \cdot  8 \cdot 9$ &  & $4 \cdot 24/8 \cdot 12$ & $F_8$\\
$E_{7,0}$ & $Q_{11}$ & $18/4 \cdot  6 \cdot  7$ &  & $18/9$ & $F_9$\\
$E_{8,0}$ & $Q_{12}$ & $15/ 3 \cdot  5 \cdot  6$ & $3 \cdot  6$ & $15/5$ & $F_{10}$\\
$D_5^1$ & $S_{11}$ &   $16/4 \cdot  5 \cdot 6$ & & $16/8$ & $K_8^*$\\
$E_{6,1}$ & $U_{12}$ & $12/3 \cdot  4 \cdot  4$ & $4 \cdot  4$ & $12/3$ & $K_9^*$ \\
$D_4^2$ & $U_{12}$ & $12/3 \cdot  4 \cdot  4$ & & $4 \cdot  12/2 \cdot  6$ & $D_4^2$ \\
\hline
\end{tabular} 
$$
\caption{Simple, parabolic, and exceptional unimodal boundary singularities}
\label{Tab2}
\end{table}

\begin{table}
$$
\begin{tabular}{|c|c|c|c|c|c|c|}
\hline
 Name & Ambient &  Gabrielov & Dolgachev & $\psi_{\overline{f}}$ & Arnold's dual & Dual\\
\hline
$F_8$ & $E_{14}$ & $2\ 3\ 9$   & $3\ 3\ 4$ & $3 \cdot 4$ & $Q_{10}$ & $E_{6,0}$\\
$F_9$ & $J_{3,0}$ & $2\ 3\ 10$  & $2\ 2\ 2\ 3$ & $2 \cdot 6$  &   & $E_{7,0}$\\
$F_{10}$ & $E_{18}$ & &  &  & & $E_{8,0}$\\
$K_8^*$ & $W_{13}$ & $2\ 5\ 6$ & $3\ 4\ 4$ & $3 \cdot 4$ & $S_{11}$  & $D_5^1$\\
$K_9^*$ & $W_{1,0}$ & $2\ 6\ 6$ & $2\ 2\ 3\ 3$ & $2 \cdot 6$ &  & $E_{6,1}$ \\
$K_8^{**}$ & $W_{12}$ & $2\ 5\ 5$ & $2\ 5\ 5$ & $2 \cdot 5$ & $W_{12}$  & $K_8^{**}$ \\
$E_{6,0}$ & $Q_{10}$ & $3\ 3\ 4$ & $2\ 3\ 9$ & & $E_{14}$ & $F_8$\\
$E_{7,0}$ & $Q_{11}$ & $3\ 3\ 5$ & $2\ 4\ 7$ & & $Z_{13}$ & $F_9$\\
$E_{8,0}$ & $Q_{12}$ & $3\ 3\ 6$ & $3\ 3\ 6$ & $3 \cdot 6$ &  $Q_{12}$  & $F_{10}$\\
$D_5^1$ & $S_{11}$ & $3\ 4\ 4$ & $2\ 5\ 6$ & & $W_{13}$  & $K_8^*$\\
$E_{6,1}$ & $U_{12}$ & $4\ 4\ 4$ & $4\ 4\ 4$ & $4 \cdot 4$ & $U_{12}$ & $K_9^*$ \\
$D_4^2$ & $U_{12}$ & $4\ 4\ 4$ & $4\ 4\ 4$ & & $U_{12}$ & $D_4^2$ \\
\hline
\end{tabular} 
$$
\caption{Gabrielov and Dolgachev numbers of the ambient singularities}
\label{Tab3}
\end{table}

As a corollary of Theorem~\ref{thmhyp} and Proposition~\ref{Prop:bound} we obtain the following reformulation of Theorem~\ref{Thm:St} in the cases $B_n$, $C_n$, and $F_4$. Let $\overline{f}$ be a simple boundary singularity with ambient singularity $(X,0)$. Let $\psi_X$ be the characteristic polynomial of the Coxeter element of the affine root system corresponding to $(X,0)$ and let $\displaystyle{\phi_1= \frac{\phi_X}{\phi_{\overline{f}}}}$.  Define $\displaystyle{\psi_{\overline{f}}:=  \frac{\psi_X}{\phi_1}}$.

\begin{corollary} If $\overline{f}$ is a simple boundary singularity then the Poincar\'e series of the ambient singularity $(X,0)$ satisfies
$$p_X = \frac{\phi_{\overline{f}}}{\psi_{\overline{f}}}.$$
\end{corollary}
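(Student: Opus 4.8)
The plan is to obtain the identity $p_X=\phi_{\overline f}/\psi_{\overline f}$ as a purely formal consequence of Proposition~\ref{Prop:bound}, Theorem~\ref{thmhyp}(i), and the definition of $\psi_{\overline f}$, so that the proof amounts to a short manipulation of characteristic polynomials. The first step is to check that Theorem~\ref{thmhyp}(i) applies in each of the three cases. A simple boundary singularity $B_n$, $C_n$, $F_4$ has ambient hypersurface singularity $A_{2n-1}$, $D_{n+1}$, $E_6$ respectively; the $A$-type case has \emph{odd} index $2n-1$, so no ambient singularity on the list is of type $A_{2m}$, and Theorem~\ref{thmhyp}(i) gives $p_X=\phi_X/\psi_X$, where $\phi_X$ is the characteristic polynomial of the Coxeter element and $\psi_X$ that of the affine Coxeter element of the root system attached to $(X,0)$ --- this $\psi_X$ being precisely the one used in the definition of $\psi_{\overline f}$.

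Then I would unwind the definitions. Proposition~\ref{Prop:bound} says $\phi_X=\phi_1\phi_{\overline f}$, equivalently $\phi_1=\phi_X/\phi_{\overline f}$, which is exactly the quantity entering $\psi_{\overline f}=\psi_X/\phi_1$; hence
\[
\frac{\phi_{\overline f}}{\psi_{\overline f}}=\frac{\phi_{\overline f}\,\phi_1}{\psi_X}=\frac{\phi_X}{\psi_X}=p_X,
\]
the last equality being the first step. Because the corollary is phrased as a ``reformulation of Theorem~\ref{Thm:St}'', I would in addition record the explicit Frame shapes case by case and check agreement with Table~\ref{Tab2} (and with $\phi_{R^\vee}$, $\psi_{R^\vee}$ in Table~\ref{Tab1}). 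For $B_n$, say, $\phi_X=2n/1$ together with $p_X=2n/1\cdot n\cdot n$ force $\psi_X=n\cdot n$; the folded $A_{2n-1}$ diagram has $\mu_0=n-1$ and $\mu_1=1$, so $\phi_1$ is the characteristic polynomial of a Coxeter element of type $A_{n-1}$, namely $n/1$; thus $\phi_{\overline f}=\phi_X/\phi_1=2n/n$ and $\psi_{\overline f}=\psi_X/\phi_1=1\cdot n$, matching the table, and $\phi_{\overline f}/\psi_{\overline f}=2n/1\cdot n\cdot n=p_X$. The computations for $C_n$ (ambient $D_{n+1}$) and $F_4$ (ambient $E_6$) run the same way.

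The one point that needs attention --- and it is the ``hard part'' only in a bookkeeping sense --- is to make sure the three appearances of $\phi_X$ denote the same polynomial: the characteristic polynomial of the monodromy operator $c$ on $H_2(X_\lambda)$ in Proposition~\ref{Prop:bound}, the characteristic polynomial of the Coxeter element in Theorem~\ref{thmhyp}(i), and the numerator of $p_X$. For a weighted homogeneous singularity the classical monodromy is conjugate to the Coxeter element of its Coxeter--Dynkin diagram, so these coincide, and likewise $\psi_X$ of Theorem~\ref{thmhyp}(i) is the $\psi_X$ used to define $\psi_{\overline f}$. Once these identifications are in place nothing beyond Proposition~\ref{Prop:bound} and Theorem~\ref{thmhyp}(i) is required; the substance of the result sits in those two statements and, ultimately, in Theorem~\ref{Thm:St}.
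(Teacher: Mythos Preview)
Your argument is correct and is exactly the derivation the paper intends: the corollary is stated without proof as an immediate consequence of Theorem~\ref{thmhyp}(i) and Proposition~\ref{Prop:bound} together with the definition $\psi_{\overline f}=\psi_X/\phi_1$, and your manipulation $\phi_{\overline f}/\psi_{\overline f}=\phi_1\phi_{\overline f}/\psi_X=\phi_X/\psi_X=p_X$ is precisely that. Your additional check that the ambient singularities $A_{2n-1}$, $D_{n+1}$, $E_6$ avoid the excluded $A_{2m}$ case, and your case-by-case verification against Table~\ref{Tab2}, go a bit beyond what the paper spells out but are in the same spirit.
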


According to Matov \cite{M80} the weighted homogeneous unimodal boundary singularities fall into two classes. There are 3 parabolic ones: a dual pair $F_{1,0} \leftrightarrow L_6=D_{4,1}$ and a self-dual one denoted by $K_{4,2}$. Moreover, there are 12 exceptional ones, see Table~\ref{Tab2}. 
The corresponding ambient singularities are either unimodal (8 different singularities, one occuring twice) or bimodal (3 cases). There are 5 dual pairs and 2 self-dual singularities. Looking at Table~\ref{Tab2} one observes that if the ambient singularities are dual in Arnold's sense then the corresponding boundary singularities are Lagrange dual. The Poincar\'e series $p_X$ and the characteristic polynomials $\phi_{\overline{f}}$ are also indicated in Table~\ref{Tab2}. 

\begin{theorem} For 7 of the 12 exceptional unimodal boundary singularities, the Poincar\'e series of the ambient singularity $(X,0)$ is a quotient
$$p_X= \frac{\phi_{\overline{f}}}{\psi_{\overline{f}}}$$
where $\psi_{\overline{f}}$ is the characteristic polynomial of the Coxeter element of a folded $T_{p,q,q}$- or $T_{p,p,q}$-diagram having one vertex less than the Coxeter-Dynkin diagram of $\overline{f}$. \end{theorem}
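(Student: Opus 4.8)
The seven singularities in question are exactly those carrying a nonempty entry $\psi_{\overline{f}}$ in Table~\ref{Tab2}, namely $F_8$, $F_9$, $K_8^*$, $K_9^*$, $K_8^{**}$, $E_{8,0}$ and $E_{6,1}$, whose ambient singularities are $E_{14}$, $J_{3,0}$, $W_{13}$, $W_{1,0}$, $W_{12}$, $Q_{12}$ and $U_{12}$ respectively. The plan is a case-by-case verification hinged on one general computation: the characteristic polynomial of the Coxeter element of a \emph{folded} $T_{p,q,q}$-diagram. I note first that for each of these seven ambient singularities the product formula for the Poincar\'e series gives $\deg p_X = d-q_1-q_2-q_3 = 1$. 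Hence, once the identity $p_X = \phi_{\overline{f}}/\psi_{\overline{f}}$ has been established, the ``one vertex less'' normalisation is automatic: the Coxeter-Dynkin diagram of $\overline{f}$ has $\deg\phi_{\overline{f}} = \mu(\overline{f})$ vertices, the folded $T$-diagram has $\deg\psi_{\overline{f}}$ vertices, and $\deg\phi_{\overline{f}} - \deg\psi_{\overline{f}} = \deg p_X = 1$.

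Next I would assemble, in each case, the two ingredients on the right-hand side. The Poincar\'e series $p_X$ is read off from the weights and the degree of the ambient singularity (the entries of Table~\ref{Tab2}). The characteristic polynomial $\phi_{\overline{f}}$ of the monodromy of $\overline{f}$ is obtained from Proposition~\ref{Prop:bound} as $\phi_X/\phi_1$, where $\phi_X$ is the (cyclotomic) characteristic polynomial of the monodromy of the weighted homogeneous ambient singularity and $\phi_1$ is the characteristic polynomial of the Coxeter element of the subdiagram spanned by the cycles $\widehat{\delta}_i$; by Corollary~\ref{Cor:dual} the polynomial $\phi_{\overline{f}}$ coincides with that of the Lagrange dual boundary singularity, a consistency check visible in Table~\ref{Tab2}. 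Forming $\psi_{\overline{f}} := \phi_{\overline{f}}/p_X$ and cancelling the binomials $1-t^k$ (elementary arithmetic with the Frame shapes in Table~\ref{Tab2}), one finds in every case that
\[
\psi_{\overline{f}}(t) = (1-t^{p})(1-t^{q})
\]
for an explicit pair of exponents, namely $\{p,q\} = \{3,4\},\{2,6\},\{3,4\},\{2,6\},\{2,5\},\{3,6\},\{4,4\}$ in the seven cases above.

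It remains to recognise $(1-t^{p})(1-t^{q})$ as the characteristic polynomial of the Coxeter element of a folded $T_{p,q,q}$- (equivalently $T_{p,p,q}$-) diagram. For this I would apply the determinant identity occurring in the proof of Proposition~\ref{Prop:bound}, which uses only the block form of the intersection matrix, to the diagram $T_{p,q,q}$ (that of Figure~\ref{Fig1} with the vertex $\mu$ deleted and with two of the three parameters equal) equipped with the graph involution interchanging its two equal arms. This yields that the folded Coxeter polynomial equals $\phi_{T_{p,q,q}}/\phi_1'$, where $\phi_{T_{p,q,q}}(t) = (1-t)^{-1}(1-t^{p})(1-t^{q})^2$ is the monodromy polynomial recorded in Section~1 and $\phi_1'(t) = (1-t^{q})/(1-t)$ is the characteristic polynomial of a Coxeter element of one of the two interchanged arms, which is an $A_{q-1}$-chain (the factor $2$ appearing in the Gram matrix of the cycles $\widehat{\delta}_i$ leaves the reflections unchanged). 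Hence the folded Coxeter polynomial is $(1-t^{p})(1-t^{q}) = \psi_{\overline{f}}$, and $p_X = \phi_{\overline{f}}/\psi_{\overline{f}}$ follows. In the five cases whose ambient singularity is exceptional unimodal, the diagram that appears is the $T_{p,q,q}$ whose parameters are the Dolgachev numbers of the ambient (two of which coincide), and combining this with Theorem~\ref{thmhyp}(iii) yields a conceptual explanation of the identity.

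I expect the genuine obstacle to be the two remaining cases, $F_9$ and $K_9^*$, whose ambient singularities $J_{3,0}$ and $W_{1,0}$ are bimodal, so that Theorem~\ref{thmhyp}(iii) is unavailable. There one must instead produce the Coxeter-Dynkin diagram of $\overline{f}$, and hence $\phi_{\overline{f}}$, by folding the Coxeter-Dynkin diagram of the bimodal ambient singularity (available from \cite{AGV88}) in the manner dictated by Matov's classification \cite{M80}, and then check by hand that $\phi_{\overline{f}}/p_X$ is again of the form $(1-t^{p})(1-t^{q})$; for $F_9$ this already gives the full statement, even though there the parameters $2,6,6$ are neither the Dolgachev nor the Gabrielov numbers of $J_{3,0}$. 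The remaining five cases reduce to Table~\ref{Tab2} together with the single determinant computation above.
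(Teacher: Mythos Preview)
Your proposal is correct and follows essentially the same line as the paper's proof: for the five cases $F_8$, $K_8^*$, $K_8^{**}$, $E_{8,0}$, $E_{6,1}$ both you and the paper reduce to Theorem~\ref{thmhyp}(iii) by folding the $T_{p,q,q}$-diagram whose parameters are the Dolgachev numbers of the ambient singularity, and your explicit computation that the folded Coxeter polynomial of $T_{p,q,q}$ equals $(1-t^p)(1-t^q)$ via the determinant identity of Proposition~\ref{Prop:bound} is exactly what the paper's phrase ``folding the Dolgachev graph'' encodes. The only real difference is in the two bimodal cases: for $K_9^*$ the paper invokes the structural fact from \cite{E83} that the Coxeter--Dynkin diagram of $W_{1,0}$ is an extension of $T_{2,6,6}$ (so the folding argument still applies verbatim), and then dispatches $F_9$ by the observation that its $\psi_{\overline f}$ coincides with that of $K_9^*$; your route via direct computation of $\phi_{\overline f}/p_X$ from the Frame shapes in Table~\ref{Tab2} is equally valid and more self-contained, at the cost of forgoing the geometric identification of the folded $T_{2,6,6}$-diagram inside the ambient diagram.
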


\begin{proof} For $F_8, K_8^*, K_8^{**}, E_{8,0}$, and $E_{6,1}$ this follows from Theorem~\ref{thmhyp}  by folding the Dolgachev graph of the ambient singularity (cf.\ Table~\ref{Tab3}).

For $K_9^*$ the  ambient singularity is $W_{1,0}$. It has a Coxeter-Dynkin diagram which is an extension of the $T_{2,6,6}$-diagram  \cite{E83}. The statement for this case can be derived from this fact.

For $F_9$ the polynomial $\psi_{\overline{f}}$ is the same as the corresponding polynomial of $K_9^*$.
\end{proof}

\bigskip
\noindent Leibniz Universit\"{a}t Hannover, Institut f\"{u}r Algebraische Geometrie,\\
Postfach 6009, D-30060 Hannover, Germany \\
E-mail: ebeling@math.uni-hannover.de\\


\begin{thebibliography}{BKKh}
{\small 

\bibitem[A1]{A78} V.~I.~Arnold: Critical points of functions on a manifold with boundary, the
simple {L}ie groups {$B\sb{k}$}, {$C\sb{k}$}, {$F\sb{4}$} and singularities of evolutes.
{\cyr Uspekhi Mat. Nauk} {\bf 33}, 91--105, 237 (1978) (Engl. translation in Russian Math. Surveys {\bf 
33}, 99--116 (1978)).

\bibitem[A2]{A93} V.~I.~Arnold (Ed.): Dynamical systems VIII. Singularity theory II. Encyclopaedia of Math. Sciences Vol. {\bf 39}, Springer-Verlag, Berlin etc., 1993.

\bibitem[AGV1]{AGV85} V.~I.~Arnold,   S.~M.~Gusein-Zade,  A.~N.~Varchenko:
Singularities of Differentiable Maps, Volume I, Birkh\"auser,
Boston Basel Berlin 1985.

\bibitem[AGV2]{AGV88} V.~I.~Arnold,   S.~M.~Gusein-Zade,  A.~N.~Varchenko:
Singularities of Differentiable Maps. Volume II. Birkh\"auser, Boston Basel
Berlin 1988.


\bibitem[B]{B68} N.~Bourbaki: Groupes et alg\`{e}bres de Lie, Chapitres 4,5 et 6.
Hermann, Paris 1968.

\bibitem[CS]{CS94} S.~Chmutov, I.~Scherback: Dynkin diagrams of dual boundary singularities are dual. J. Algebraic Geom. {\bf 3}, 449--462 (1994).

\bibitem[D]{Dolgachev74} I.~V.~Dolgachev: Quotient-conical singularities on
complex surfaces. {\cyr Funktsional. Anal. i Prilozhen.} {\bf 8}:2, 75--76 (1974) (Engl.
translation in Funct. Anal. Appl. {\bf 8}, 160--161 (1974)).

\bibitem[E1]{E83} W.~Ebeling: Milnor lattices and geometric bases of some special singularities.
In: N{\oe}uds, tresses et singularit\'es (C.Weber, ed.), 
          Monographie~Enseign.~Math.~{\bf 31}, 
          Gen{\`e}ve 1983, pp.~129--146 and
          Enseign.~Math.~{\bf 29}, 263--280 (1983).

\bibitem[E2]{E02} W.~Ebeling: Poincar\'{e} series and monodromy of a two-dimensional 
quasihomogeneous hypersurface singularity. Manuscripta math. {\bf 107}, 271--282
(2002).

\bibitem[E3]{E03} W.~Ebeling: The Poincar\'e series of some special
quasihomogeneous surface singularities. Publ. RIMS, Kyoto Univ. {\bf 39},
393--413 (2003).

\bibitem[EP]{EP} W.~Ebeling, D.~Ploog: McKay correspondence for the Poincar\'e series of Kleinian and Fuchsian singularities. In preparation.

\bibitem[G]{G75} A.~M.~Gabrielov: Dynkin diagrams  for unimodal singularities.
{\cyr Funktsional. Anal. i Prilozhen.} {\bf 8}:3, 1--6 (1974) (Engl. translation in
Funct. Anal. Appl. {\bf 8}, 192--196 (1974)).

\bibitem[LdlP]{LdlP}  H.~Lenzing, J.~de la Pe\~{n}a: Extended canonical algebras and Fuchsian singularities. arXiv: math.RT/0611532.

\bibitem[Ma]{M80} V.~I.~Matov: Unimodal germs of functions on a manifold with a boundary.
{\cyr Funktsional. Anal. i Prilozhen.} {\bf 14}:1, 69--70 (1980) (Engl. translation in
Funct. Anal. Appl. {\bf 14}, 55--57 (1980)).

\bibitem[McK]{McKay80} J.~McKay: Graphs, singularities, and finite groups. Proc. Symp.
Pure Math. Vol. {\bf 37}, 183--186 (1980).

\bibitem[Sl1]{Sl80} P.~Slodowy: Simple singularities and simple algebraic groups.
Lecture Notes in Math., Vol.~{\bf 815}, Springer, Berlin 1980.
      
\bibitem[Sl2]{Sl88} P.~Slodowy: Sur les groupes finis attach\'es aux singularit\'es simples. In: Introduction \`a la th\'eorie des singularit\'es, II, 
Travaux en Cours Vol.~{\bf 37}, Hermann, Paris 1988, pp.~109--126.

\bibitem[St]{St06} R.~Stekolshchik: Kostant's generating functions, Ebeling's theorem and McKay's observation relating the Poincar\'e series. arXiv: math.RT/0608500.
}
\end{thebibliography}
\end{document}